\renewcommand\eqref[1]{(\ref{#1})} 
\allowdisplaybreaks \numberwithin{equation}{section}
\theoremstyle{plain}
\newtheorem{theorem}{Theorem}[section]
\newtheorem{corollary}[theorem]{Corollary}
\theoremstyle{definition}
\newtheorem{definition}[theorem]{Definition}
\newtheorem{remark}[theorem]{Remark}
\newtheorem{example}[theorem]{Example}
\begin{document}

\title[Almost sectorial operators]{Direct and inverse abstract Cauchy problems with fractional powers of almost sectorial operators}

\author[J.E. Restrepo]{J.E. Restrepo}
\address{\textcolor[rgb]{0.00,0.00,0.84}{Joel E. Restrepo \newline Department of Mathematics \newline Center for Research and Advanced Studies \newline Av. IPN 2508, 07360, Mexico City, Mexico}}
\email{\textcolor[rgb]{0.00,0.00,0.84}{joel.restrepo@cinvestav.mx; cocojoel89@yahoo.es}}

\date{\today}
\thanks{The paper has been accepted for publication in the journal “Fractional Calculus and Applied Analysis”.}

\begin{abstract}
We derive the explicit solution operator of an abstract Cauchy problem involving a time-variable coefficient and a fractional power of an almost sectorial operator. The time-variable coefficient is recovered by solving the inverse abstract Cauchy problem using the solution operator representation. As a complement, we also study similar problems by considering almost sectorial operators that depend on a time-variable.  
\end{abstract}

\keywords{Direct abstract Cauchy problems, inverse abstract Cauchy problems, almost sectorial operators, time-variable coefficient, bounded and unbounded operators, dense and non-dense domains.} 

\subjclass[2010]{65J08; 65N21, 34K08, 47A05.}

\maketitle

\tableofcontents

\section{Introduction}

In functional analysis, specifically in operator theory, the Hille--Yosida theorem characterizes the generators of strongly continuous semigroups of linear operators on Banach spaces. That is, for a linear and densely defined operator $A:D(A)\subset X\to X$ ($X$ a complex Banach space), the following estimate holds:
\[
\|(\lambda I-A)^{-n}\|\leqslant \frac{C}{(\lambda-\omega)^n},\quad \lambda>\omega,
\]
for all positive integers $n$ and any $\lambda$ in the resolvent set $\rho(A).$ For contraction semigroups, the Hille--Yosida theorem can be easily characterized by those operators $A$ such that $\|(\lambda I-A)^{-1}\|\leqslant \frac{1}{\lambda},$ for any $\lambda\in\rho(A)$ with $\lambda>0.$ Nevertheless, there are different classes of important and interesting operators that do not satisfy the estimates above. We recall the so-called almost sectorial operators, see Definition \ref{aso}. More details about such operators can be found e.g. in \cite{r-13,JEE2002,r-18}. It is known that operators satisfying the condition \eqref{ineq-aso} do not generate a $C_0$-semigroup since they do not satisfy the Hille--Yosida estimates. But, in this case, the operator $A$ will generate a type of semigroup called an analytic semigroup of growth order $\gamma$ (see Definition \ref{analytic-s}). This concept was first introduced by Da Prato in \cite{r-2} for positive integer orders. The generalization for any positive order was given in different works e.g. \cite{r-13,oka,r-18,r-22} and the references therein. These semigroups are not strongly continuous at $t=0,$ and this is one of the main differences compared with the $C_0$-semigroups. Also, in this work, the operators do not need to have dense domains and/or ranges. This generality is not always assumed everywhere since some researchers studied almost sectorial operators defined over domains that are dense as well, see e.g. \cite{dense-1}. Almost sectorial operators frequently appear by the consideration of elliptic operators in regular spaces. Let us briefly recall some classical examples in this setting. For instance, the negative Laplacian in a bounded domain $\Omega$ is sectorial (this means $\gamma=1$ in Definition \ref{aso}) under some suitable boundary conditions in $L^p(\Omega)$ \cite[Section 1.3]{6-intro}. Moreover, it is also sectorial in spaces of bounded or continuous functions \cite{more-intro,haase,19-intro}. While, in the space of H\"older continuous functions a translation of the Laplacian is almost sectorial, see e.g. \cite{wahl} or \cite[Example 3.1.33]{lunardi}. Some other good examples of almost sectorial operators can be found in \cite[Section 2]{JEE2002}. An example of an almost sectorial operator that is not sectorial and involves Riemann-Liouville fractional derivatives in the Banach space $C_{\infty}(\mathbb{R};\mathbb{C}^2)$ (continuous functions that vanish at infinity) can be found in \cite[Example 3.2]{peri}. In \cite{peri}, more examples of almost sectorial operators are given using Riemann-Liouville derivatives.     
\\
\indent In this paper, we show explicitly the unique solution operator for the abstract Cauchy problem involving a time-dependent variable coefficient and a fractional power of an almost sectorial operator. By using the representation of the solution operator, we then consider two direct abstract Cauchy problems with the same time-variable coefficient, with the target of finding the time-dependent coefficient (inverse abstract Cauchy problems). To the best of our knowledge, it seems that it is the first time that the latter problems have been considered by means of almost sectorial operators. Here, we mainly follow some ideas from \cite{ORS} and \cite{JEE2002}. Note that the functional calculus developed for almost sectorial operators in \cite{JEE2002} will help us to construct the solution operators of our equations. This approach is totally different from the one used in \cite{ORS}. Also, in this paper, we consider ordinary time derivatives; but future works will be studied with time-fractional derivatives \cite{li1,li2}.        
\\
\indent We complement our studies, by considering the case of non-autonomous evolution equations with almost sectorial operators. Here, the operator depends on a time-variable. For some works in this direction, we recommend checking e.g. \cite{2008,non}. Again, for this case, we find the unique solution operator (explicitly) for the abstract Cauchy problem with a time-dependent variable coefficient and an almost sectorial operator $A(\tau)$ for $\tau\geqslant0$. We also study an inverse abstract Cauchy problem.
\\
\indent The structure of the paper is as follows. In Section \ref{preli}, we recall some definitions and results on almost sectorial operators and abstract evolution equations. We continue with Section \ref{direct}, where we give some of the main results on the representation of the solution operator of abstract Cauchy problems with time-variable coefficients. Section \ref{inverse} is devoted to the study of the inverse abstract Cauchy problem in finding the time-dependent variable coefficient. In the last Section \ref{Non-autonomous}, we also study non-autonomous direct and inverse  Cauchy problems. In this case, almost sectorial operators depend on a time-variable. 

\section{Preliminaries}\label{preli}

We begin by recalling some definitions and results that will be used throughout the paper. We always assume that $(X,\|\cdot\|)$ is a complex Banach space. An operator in $X$ means a linear map $A:D(A)\subset X\to X$ whose domain $D(A)$ is a linear subspace of $X.$ We denote by $\sigma(A)$ the spectrum of $A,$ and by $R(A)$ its range. For $0<\mu<\pi,$ we denote by $S_\mu^0$, the open sector $\{z\in\mathbb{C}\setminus\{0\}: |\text{arg}\,z|<\mu\},$ and its closure $S_\mu:=\{z\in\mathbb{C}\setminus\{0\}: |\text{arg}\,z|\leqslant\mu\}\cup \{0\}.$ We consider the function $\text{arg}$ with values in $(-\pi,\pi].$  Set 
\[
\mathcal{F}_0^{\gamma}(S_\mu^0)=\bigcup_{s<0}\Psi_s^{\gamma}(S_\mu^0)\cup\Psi_0(S_\mu^0),
\]
and 
\[
\mathcal{F}(S_\mu^0)=\{f\in\mathcal{H}(S_\mu^0):\,\,\text{there exists}\,\,k,n\in\mathbb{N}\,\, \text{such that}\,\, f\psi_n^k\in \mathcal{F}_0^{\gamma}(S_\mu^0)\},
\]
where 
\[
\mathcal{H}(S_\mu^0)=\{f:S_\mu^0\to\mathbb{C};\,\,f\,\,\text{is holomorphic}\},
\]
\[
\mathcal{H}^{\infty}(S_\mu^0)=\{f\in \mathcal{H}(S_\mu^0),\,\,f\,\,\text{is bounded}\},
\]
\[
\varphi_0(z)=\frac{1}{1+z},\quad \psi_n(z)=\frac{z}{(1+z)^n},\quad z\in\mathbb{C}\setminus\{-1\},\,\,n\in\mathbb{N}\cup\{0\},
\]
\[
\Psi_0(S_\mu^0)=\left\{f\in\mathcal{H}(S_\mu^0):\sup_{z\in S_\mu^0}\left|\frac{f(z)}{\varphi_0(z)}\right|<+\infty\right\},
\]
and for each $s<0,$
\[
\Psi_s^{\gamma}(S_\mu^0)=\left\{f\in\mathcal{H}(S_\mu^0):\sup_{z\in S_\mu^0}|\psi_n^s(z)f(z)|<+\infty\right\},
\]
where $n$ is the smallest integer such that $n\geqslant2$ and $\gamma+1<-(n+1)s.$ It follows
\[
\mathcal{F}_0^\gamma(S_\mu^0)\subset \mathcal{H}^{\infty}(S_\mu^0)\subset \mathcal{F}(S_\mu^0)\subset \mathcal{H}(S_\mu^0), 
\]
and for $k,n\in\mathbb{N}\cup\{0\}$ with $n>k$, one has $\psi_n^k\in \mathcal{F}_0^{\gamma}(S_\mu^0).$
\begin{definition}\label{aso}
Let $-1<\gamma<0$ and $0\leqslant \omega<\pi.$ By $\Theta_\omega^\gamma(X)$ we denote the set of all closed linear operators $A:D(A)\subset X\to X$ which satisfy
\begin{enumerate}[(a)]
    \item $\sigma(A)\subset S_\omega.$
    \item\label{ineq-aso} For any $\omega<\mu<\pi$, there exists a positive constant $C_\mu$ such that  
    \[
    \|(z-A)^{-1}\|\leqslant C_{\mu}|z|^{\gamma},\quad\text{for any}\quad z\notin S_\mu.
    \]
\end{enumerate}
\end{definition}

\indent For simplicity, we write $\Theta_\omega^\gamma$ instead of $\Theta_\omega^\gamma(X)$. {\it A linear operator $A$ will be called an almost sectorial operator in $X$ if $A\in \Theta_\omega^\gamma.$} Note that $0\in\rho(A)$ for any $A\in\Theta_\omega^\gamma.$ Also, $A$ is injective \cite[Remark 2.2]{JEE2002}. The operators in $\Theta_\omega^\gamma$ have the possibility of having non-dense domain and/or range. This latter feature gives a different panorama with respect to the classical results, where dense domains are assumed \cite{pazy,Prus}.    
\\
\indent We usually denote by $\Gamma_\theta$ $(0<\theta<\pi)$ the path 
\[\{\rho e^{-i\theta}:\rho>0\}\cup\{\rho e^{i\theta}:\rho>0\}\]
oriented such that the sector $S_\theta^0$ lies to the left of $\Gamma_\theta.$
\begin{definition}
Let $A\in\Theta_\omega^\gamma,$ $\omega<\theta<\mu<\pi,$ and $f\in\mathcal{F}(S_\mu^0).$ For $k,m\in\mathbb{N}$ such that $f\psi_m^k\in\mathcal{F}_0^\gamma(S_\mu^0),$ we define the linear operator $f(A)$ in the domain $D(f(A))=\{x\in X: (f\psi_m^k)(A)x\in D(A^{(m-1)k})\}$ by
\[
f(A)=(\psi_m^k(A))^{-1}\frac{1}{2\pi i}\int_{\Gamma_\theta}(f\psi_m^k)(z)(z-A)^{-1}{\rm d}z,
\]
where the above integral is absolutely convergent and defines a bounded linear operator on $X.$
\end{definition}

\indent For every $\beta\in\mathbb{C},$ the function $h_\beta(z)=z^{\beta}$ for all $z\in\mathbb{C}\setminus(-\infty,0]$ belongs to the class $\mathcal{F}(S_\nu^0)$ for $0<\nu<\pi,$ and hence we can define $A^{\beta}=h_\beta(A).$ Some of the main properties of $A^{\beta}$ ($A\in\Theta_\omega^\gamma,$\,$\beta\in\mathbb{C}$) are the following \cite[Theorem 3.2]{JEE2002}:
\begin{enumerate}[(i)]
    \item The operator $A^{\beta}$ is closed.
    \item $A^{\beta}A^{\beta^*}\subset A^{\beta+\beta^*},$ for $\beta^*\in\mathbb{C}.$ If $D(A^{\beta+\beta^*})\subset D(A^{\beta^*}),$ then $A^{\beta}A^{\beta^*}=A^{\beta+\beta^*}.$
    \item $A^{\beta}$ is injective and $(A^{\beta})^{-1}=A^{-\beta}.$
\end{enumerate}

\indent Take $0<\alpha<\frac{\pi}{2\omega}$ and $t\in S_{\frac{\pi}{2}-\alpha\omega}^0$, with $\omega<\mu<\text{min}\left\{\pi,\frac{\pi-2|\text{arg}\,t|}{2\alpha}\right\}.$ Suppose that the function $g_{\alpha,t}(z)=e^{-tz^{\alpha}}$ is defined on $z\in S_\mu^0.$ Assume also that $A\in \Theta_\omega^\gamma.$ From \cite[Lemma 2.13]{JEE2002}, we conclude that 
\begin{equation}\label{representation}
\mathscr{T}_\alpha(t)=\frac{1}{2\pi}\int_{\Gamma_\theta}e^{-tz^{\alpha}}(z-A)^{-1}{\rm d}z,\quad \omega<\theta<\mu,
\end{equation}
is a bounded linear operator in $X.$ In particular, the above holds for $0<\alpha\leqslant 1.$ Note that the above expression for $\mathscr{T}_\alpha(t)$ could also be denoted by $e^{-tA^{\alpha}}.$ The latter notation is frequently used for strongly continuous semigroups generated by $A^{\alpha}$. For almost sectorial operators, $A^{\alpha}$ does not always generate a strongly continuous semigroup, see item (5) of Theorem \ref{thm3.9}. This semigroup is singular at $t=0,$ see item (2) of Theorem \ref{thm3.9}.   

\indent The following definition was introduced in \cite{r-2}. Analytic semigroups of growth order are defined. The notion is adapted from the $C_0-$ semigroups due to the semigroup property holding for these kinds of operators.

\begin{definition}\label{analytic-s}
Let $0<\mu<\pi/2$ and $\kappa>0.$ A family $\{\mathscr{T}(t):t\in S_\mu^0\}$ is said to be an analytic semigroup of growth order $\kappa$ if the following conditions hold:
\begin{enumerate}[(I)]
    \item $\mathscr{T}(t+s)=\mathscr{T}(t)\mathscr{T}(s)$ for any $t,s\in S_\mu^0.$
    \item \label{ii}The mapping $t\to\mathscr{T}(t)$ is analytic in $S_\mu^0.$
    \item There exists a positive constant $C$ such that 
    \[
    \|\mathscr{T}(t)\|\leqslant Ct^{-\kappa},\quad\text{for any}\quad t>0.
    \]
    \item If $\mathscr{T}(t)x=0$ for some $t\in S_\mu^0$, then $x=0.$
\end{enumerate}
\end{definition}

\indent The above definition has two differences with respect to the one given by Da Prato. First, the set $\displaystyle X_0=\bigcup_{t>0}\mathscr{T}(t)X$ is not necessarily dense in $X.$ Second, the strong continuity of the mapping $t\to \mathscr{T}(t)$ for $t>0$ is replaced by condition \eqref{ii}.

\medskip\indent We recall some properties of the families $\{\mathscr{T}_{\alpha}(t):t\in S_{\frac{\pi}{2}-\alpha\omega}\}$ stated in \cite[Theorem 3.9]{JEE2002} that will be used implicitly in some of our proofs. 

\begin{theorem}\label{thm3.9}
Suppose that $A\in\Theta_\omega^\gamma$ for some $0<\omega<\frac{\pi}{2}$ and $0<\alpha<\frac{\pi}{2\omega}.$ Then the family $\{\mathscr{T}_\alpha(t):t\in S_{\frac{\pi}{2}-\alpha\omega}^0\}$ is an analytic semigroup of growth order $\frac{1+\gamma}{\alpha}.$ So the following assertions are true.
\begin{enumerate}
    \item $\mathscr{T}_\alpha(t+s)=\mathscr{T}_\alpha(t)\mathscr{T}_\alpha(s)$ for any $t,s\in S_{\frac{\pi}{2}-\alpha\omega}.$
    \item There exists a positive constant $C(\gamma,\alpha)$ such that 
    \[
    \|\mathscr{T}_\alpha(t)\|\leqslant Ct^{-\frac{\gamma+1}{\alpha}},\quad\text{for any}\quad t>0.
    \]
    \item\label{need} The range $R(\mathscr{T}_\alpha(t))$ of $\mathscr{T}_\alpha(t)$ with $t\in S_{\frac{\pi}{2}-\alpha\omega}^0$, is contained in $D(A^{\infty})$. In particular, $R(\mathscr{T}_\alpha(t))\subset D(A^{\beta})$ for all $\beta\in\mathbb{C}$ with $\text{Re}\,\beta>0,$ and
    \[
A^{\beta}\mathscr{T}_\alpha(t)x=\frac{1}{2\pi i}\int_{\Gamma_\theta}z^{\beta}e^{-tz^{\alpha}}(z-A)^{-1}x\,{\rm d}z,\quad\text{for all}\quad x\in X.
    \]
    Also, there is a positive constant $C(\gamma,\alpha,\beta)$ such that 
    \[
    \|A^{\beta}\mathscr{T}_\alpha(t)\|\leqslant t^{-\frac{\gamma+1+{\rm Re}\,\beta}{\alpha}},\quad\text{for all}\quad t>0.
    \]
    \item\label{deri} The function $t\mapsto \mathscr{T}_{\alpha}(t)$ is analytic in $S_{\frac{\pi}{2}-\alpha\omega}^0$ and
    \[
    \frac{{\rm d}^k}{{\rm d}t^k}\mathscr{T}_{\alpha}(t)=(-1)^k A^{k\alpha}\mathscr{T}_\alpha(t),\quad\text{for all}\quad t\in S_{\frac{\pi}{2}-\alpha\omega}^0.
    \]
    \item\label{continuity} Let $\Omega_\alpha=\left\{x\in X:\displaystyle\lim_{t\to0,\,t>0}\mathscr{T}_\alpha(t)x=x\right\}$ be the continuity set of $\mathscr{T}_\alpha(\cdot)$. If $\beta>1+\gamma$ then $D(A^{\beta})\subset\Omega_\alpha.$
\end{enumerate}
\end{theorem}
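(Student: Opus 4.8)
The plan is to treat $\mathscr{T}_\alpha(t)$ as a Dunford--Riesz functional-calculus integral and to read off each property from the analytic behaviour of the scalar kernel $e^{-tz^\alpha}$ combined with the resolvent bound $\|(z-A)^{-1}\|\leqslant C_\mu|z|^\gamma$ of Definition \ref{aso}\eqref{ineq-aso}; boundedness of the integral is already granted. For the semigroup law in (1), I would write $\mathscr{T}_\alpha(t)\mathscr{T}_\alpha(s)$ as a double integral over two non-intersecting contours $\Gamma_\theta,\Gamma_{\theta'}$ with $\omega<\theta'<\theta<\mu$, insert the resolvent identity
\[
(z-A)^{-1}(w-A)^{-1}=\frac{1}{w-z}\bigl[(z-A)^{-1}-(w-A)^{-1}\bigr],
\]
and evaluate the resulting inner integrals by closing each contour and applying Cauchy's theorem; since the kernel factorizes as $e^{-tz^\alpha}e^{-sw^\alpha}$, the residue contributions collapse exactly to $\frac{1}{2\pi i}\int_{\Gamma_\theta}e^{-(t+s)z^\alpha}(z-A)^{-1}\,\mathrm{d}z=\mathscr{T}_\alpha(t+s)$, the interchange of integrals being legitimate because the superexponential decay of $e^{-tz^\alpha}$ on each ray dominates the polynomial growth $|z|^\gamma$. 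For the bound in (2), I would parametrize $\Gamma_\theta$ by $z=\rho e^{\pm i\theta}$, use $|e^{-tz^\alpha}|=e^{-t\rho^\alpha\cos(\alpha\theta)}$ with $\cos(\alpha\theta)>0$ (since $\alpha\theta<\pi/2$), estimate $\|(z-A)^{-1}\|\leqslant C_\mu\rho^\gamma$, and reduce to $\int_0^\infty e^{-ct\rho^\alpha}\rho^\gamma\,\mathrm{d}\rho$, which after $u=t\rho^\alpha$ yields a Gamma function and the exponent $-(\gamma+1)/\alpha$.

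For the regularizing property (3), the idea is that $A^\beta$ may be pulled under the integral: the candidate $\frac{1}{2\pi i}\int_{\Gamma_\theta}z^\beta e^{-tz^\alpha}(z-A)^{-1}x\,\mathrm{d}z$ converges absolutely for every $\beta$ because $e^{-tz^\alpha}$ decays faster than any power of $|z|$ grows, and one checks it equals $A^\beta\mathscr{T}_\alpha(t)x$; letting $\beta$ be arbitrary gives $R(\mathscr{T}_\alpha(t))\subset D(A^\infty)$, and the accompanying bound follows as in (2) with an extra factor $|z|^{\mathrm{Re}\,\beta}$, producing the exponent $-(\gamma+1+\mathrm{Re}\,\beta)/\alpha$ after the same Gamma-function computation. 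Part (4) is then immediate: the superexponential decay permits differentiation under the integral sign, each $t$-derivative bringing down $-z^\alpha$, so that $\frac{\mathrm{d}^k}{\mathrm{d}t^k}\mathscr{T}_\alpha(t)=\frac{(-1)^k}{2\pi i}\int_{\Gamma_\theta}z^{k\alpha}e^{-tz^\alpha}(z-A)^{-1}\,\mathrm{d}z=(-1)^k A^{k\alpha}\mathscr{T}_\alpha(t)$ by the formula of (3) with $\beta=k\alpha$; holomorphy on the sector follows by Morera's theorem applied to the integral, after verifying that the contour can be kept fixed while $t$ varies in $S_{\pi/2-\alpha\omega}^0$.

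The delicate point is the continuity statement (5). Given $x\in D(A^\beta)$ with $\beta>1+\gamma$, I would set $x=A^{-\beta}y$ with $y=A^\beta x$; since $\mathscr{T}_\alpha(t)$ commutes with $A^{-\beta}$ and both arise from the resolvent, the product rule of the functional calculus gives
\[
\mathscr{T}_\alpha(t)x-x=\frac{1}{2\pi i}\int_{\Gamma_\theta}z^{-\beta}\bigl(e^{-tz^\alpha}-1\bigr)(z-A)^{-1}y\,\mathrm{d}z.
\]
I would then pass to the limit by dominated convergence: the integrand vanishes pointwise as $t\to0^+$, the estimate $\|z^{-\beta}(z-A)^{-1}\|\leqslant C\rho^{\gamma-\beta}$ makes the tail at infinity integrable precisely when $\gamma-\beta<-1$, i.e. $\beta>1+\gamma$, while near the origin the resolvent stays bounded and $|e^{-tz^\alpha}-1|\leqslant Ct\rho^\alpha$ controls the integrand. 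I expect the justification of this last step to be the main obstacle: one must establish the functional-calculus product rule $A^{-\beta}\mathscr{T}_\alpha(t)=\frac{1}{2\pi i}\int_{\Gamma_\theta}z^{-\beta}e^{-tz^\alpha}(z-A)^{-1}\,\mathrm{d}z$ and the contour representation of $A^{-\beta}$ in the non-densely-defined almost sectorial setting, fix a consistent branch of $z^{-\beta}$ on $\Gamma_\theta$, and produce a single $t$-uniform integrable majorant; the range of $\beta$ for which the majorant near the origin is integrable can then be extended to all $\beta>1+\gamma$ using the inclusions $D(A^\beta)\subset D(A^{\beta'})$ for $\beta>\beta'$, which leave the continuity set $\Omega_\alpha$ unchanged.
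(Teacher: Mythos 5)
You should first be aware that the paper does not prove Theorem \ref{thm3.9} at all: it is recalled verbatim from \cite[Theorem 3.9]{JEE2002} (Periago--Straub) and used as a black box, so there is no in-paper proof to compare against. Your outline is, in substance, the same functional-calculus route taken in that reference: the semigroup law via the resolvent identity on two nested contours, the growth bounds via the parametrization $z=\rho e^{\pm i\theta}$ and a Gamma-function substitution $u=t\rho^{\alpha}$ (the exponents $-(\gamma+1)/\alpha$ and $-(\gamma+1+\operatorname{Re}\beta)/\alpha$ come out exactly as you say, using $\gamma>-1$ for integrability at $\rho=0$), and the smoothing/differentiation statements by moving $A^{\beta}$ and $\mathrm{d}/\mathrm{d}t$ under the contour integral. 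Two remarks. First, a terminological slip: $e^{-tz^{\alpha}}$ decays like $e^{-ct\rho^{\alpha}}$ on the rays, which for $\alpha<1$ is subexponential, not superexponential; it still dominates every power of $|z|$, which is all the argument needs. Second, for item \eqref{continuity} your identity
\[
\mathscr{T}_\alpha(t)x-x=\frac{1}{2\pi i}\int_{\Gamma_\theta}z^{-\beta}\bigl(e^{-tz^{\alpha}}-1\bigr)(z-A)^{-1}y\,\mathrm{d}z
\]
presupposes that $\frac{1}{2\pi i}\int_{\Gamma_\theta}z^{-\beta}(z-A)^{-1}y\,\mathrm{d}z$ represents $A^{-\beta}y$ on a contour through the origin, which converges only for $\operatorname{Re}\beta<1$ (the resolvent is bounded near $0$ because $0\in\rho(A)$, but $z^{-\beta}$ is not integrable there otherwise); together with the tail condition $\beta>1+\gamma$ this forces you into the window $1+\gamma<\beta<1$, which is nonempty since $\gamma<0$, and the general case then follows from $D(A^{\beta})\subset D(A^{\beta'})$ exactly as you propose. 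So the plan is sound and correctly localizes the delicate step; what remains is to import (or reprove) the product rule and the $A^{-\beta}$ contour representation from \cite[Section 2]{JEE2002}, which your write-up explicitly defers.
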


\begin{remark}
It is important to highlight and stress that, in general, the analytic semigroups of some growth order are not strongly continuous at $t=0,$ see e.g. Definition \ref{analytic-s} and Theorem \ref{thm3.9} (item 5).   
\end{remark}

\medskip\indent {\bf Abstract evolution equations.} Suppose that $A\in \Theta_\omega^\gamma$ with $0<\omega<\pi/2$ and $-1<\gamma<0.$ Consider the following abstract Cauchy problem:
\begin{equation}\label{acp}
\begin{split}
\partial_t u(t):=u_t(t)&=A^{\alpha}u(t),\quad 0<t<T,\quad 0<\alpha<\frac{\pi}{2\omega}, \\
u(0)&=u_0\in D(A^{\alpha}).
\end{split}
\end{equation}
By a classical solution of problem \eqref{acp}, we mean a function 
\[
u\in C\big([0,T);X\big)\cap C^1\big((0,T);X\big)
\]
which takes values in $D(A^{\alpha})$ for any $0<t<T$ and satisfies \eqref{acp}. It is important to note that if $u_0\in D(A^{\alpha})$ and $\alpha>1+\gamma$ then there exists a classical solution of \eqref{acp} (Theorem \ref{thm3.9}, item \eqref{continuity}). It can be proved (see e.g. \cite[Chapter 4]{JEE2002}) that equation \eqref{acp} has a unique classical solution given by 
\begin{equation}\label{acpsol}
u(t)=\mathscr{T}_\alpha(t)u_0,\quad\text{for any}\quad 0<t<T.
\end{equation} 

\section{Direct abstract Cauchy problem}\label{direct}

In this section, we study the direct abstract Cauchy problem with a time-dependent variable coefficient $\phi$ and an almost sectorial operator. We consider the abstract  Cauchy problem in a complex Banach space $X$ with the coefficient $\phi(t)$ and $A\in \Theta_\omega^\gamma$ $\big(0<\omega<\pi/2\big)$ as follows:
\begin{equation}\label{eqc}
\begin{split}
u_t(t)&=\phi(t)A^{\alpha}u(t),\quad 0<t<T,\quad 1+\gamma<\alpha<\frac{\pi}{2\omega}, \\
u(0)&=u_0\in D(A^{\alpha}),
\end{split}
\end{equation} 
where the time-dependent function $\phi:[0,+\infty)\to[0,+\infty)$ is continuous such that $\phi(t)>0$ for $t>0.$

\indent Below we prove that the solution operator $\mathscr{T}_{\alpha,\phi}(t)$ of problem \eqref{eqc} has an explicit representation in terms of the coefficient $\phi(t)$ and the analytic semigroup $\mathscr{T}_\alpha(t)$ of growth order $\frac{\gamma+1}{\alpha}$. 

\begin{theorem}\label{directsolution}
Let $X$ be a complex Banach space. Then the solution operator of problem \eqref{eqc} is given by:
\begin{equation}\label{solutioneqc}
\mathscr{T}_{\alpha,\phi}(t)=\mathscr{T}_\alpha\left\{\left(\int_0^t \phi(s){\rm d}s\right)\right\},\quad 0<t<T.
\end{equation}
\end{theorem}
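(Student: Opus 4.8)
The plan is to verify directly that the proposed operator
$\mathscr{T}_{\alpha,\phi}(t)=\mathscr{T}_\alpha\big(\Phi(t)\big)$, where
$\Phi(t)=\int_0^t\phi(s)\,{\rm d}s$, produces a function $u(t)=\mathscr{T}_{\alpha,\phi}(t)u_0$
that solves \eqref{eqc}, and then to argue uniqueness. The heart of the matter is a change of
time-variable: the autonomous equation $v_t=A^{\alpha}v$ is solved by $\mathscr{T}_\alpha(\cdot)u_0$
(formula \eqref{acpsol}), and \eqref{eqc} differs from it only by the nonautonomous time-rescaling
$t\mapsto\Phi(t)$. First I would check that $\Phi$ is well defined and takes values in the right set:
since $\phi$ is continuous, nonnegative, and strictly positive for $t>0$, the map $\Phi$ is
continuous, strictly increasing on $(0,T)$, with $\Phi(0)=0$ and $\Phi(t)>0$ for $t>0$. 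In
particular $\Phi(t)$ lies on the positive real axis, which is contained in the sector
$S_{\frac{\pi}{2}-\alpha\omega}$ on which $\mathscr{T}_\alpha$ is defined, so the composition
$\mathscr{T}_\alpha(\Phi(t))$ makes sense for $0<t<T$.

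Next I would establish existence by differentiation. Set $u(t)=\mathscr{T}_\alpha(\Phi(t))u_0$.
Because $\Phi$ is real-valued and continuously differentiable on $(0,T)$ with
$\Phi'(t)=\phi(t)$, and because $s\mapsto\mathscr{T}_\alpha(s)$ is analytic on the sector
(Theorem \ref{thm3.9}, item \eqref{deri}) with derivative $-A^{\alpha}\mathscr{T}_\alpha(s)$, the
chain rule gives
\[
u_t(t)=\frac{\rm d}{{\rm d}t}\mathscr{T}_\alpha(\Phi(t))u_0
      =\Phi'(t)\,\big(-A^{\alpha}\big)(-1)\mathscr{T}_\alpha(\Phi(t))u_0
      =\phi(t)A^{\alpha}\mathscr{T}_\alpha(\Phi(t))u_0
      =\phi(t)A^{\alpha}u(t).
\]
Here I would be careful to justify the chain rule for a Banach-space-valued analytic map composed
with a real differentiable scalar function; since $\Phi$ moves along the positive axis inside the
interior of the sector for $t>0$, the analyticity from item \eqref{deri} applies and the derivative
formula $\frac{\rm d}{{\rm d}s}\mathscr{T}_\alpha(s)=-A^{\alpha}\mathscr{T}_\alpha(s)$ is available.
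Item \eqref{need} guarantees $R(\mathscr{T}_\alpha(\Phi(t)))\subset D(A^{\alpha})$, so $A^{\alpha}u(t)$
is meaningful and $u(t)\in D(A^{\alpha})$ for every $0<t<T$. Continuity of $u$ on $(0,T)$ and the
initial condition $u(0)=\lim_{t\to0^+}\mathscr{T}_\alpha(\Phi(t))u_0$ (which reduces to the known
behaviour of $\mathscr{T}_\alpha$ at $0$, equalling $u_0$ when $u_0$ lies in the continuity set of
item \eqref{continuity}) complete the verification that $u$ is a classical solution.

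For uniqueness I would reduce to the known uniqueness for \eqref{acp}. Given any solution $u$ of
\eqref{eqc}, the strict monotonicity of $\Phi$ on $(0,T)$ lets me invert it and define
$v(\tau)=u(\Phi^{-1}(\tau))$ on $(0,\Phi(T^-))$; the chain rule then turns \eqref{eqc} into the
autonomous problem $v_\tau=A^{\alpha}v$, whose classical solution is unique and equal to
$\mathscr{T}_\alpha(\tau)u_0$ by \eqref{acpsol}. Undoing the substitution yields
$u(t)=\mathscr{T}_\alpha(\Phi(t))u_0$, as claimed. The main obstacle I anticipate is analytic rather
than algebraic: one must ensure the time-change $\Phi$ really keeps the argument inside the sector of
analyticity and, near $t=0$, handle the fact that $\mathscr{T}_\alpha$ is only an analytic semigroup
of growth order $\frac{\gamma+1}{\alpha}$ (hence singular as $s\to0^+$), so the regularity of $u$ at
the origin and the sense in which the initial condition is attained must be stated with the same care
already used for \eqref{acp}. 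Once the chain rule is justified on the open interval, the computation
itself is routine.
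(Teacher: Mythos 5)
Your proposal is correct and follows essentially the same route as the paper: the paper's entire proof is your uniqueness step (compose a solution $u$ of \eqref{eqc} with $\zeta=\Phi^{-1}$, observe that $u\circ\zeta$ solves the autonomous problem \eqref{acp}, and invoke the uniqueness of \eqref{acpsol}), while you additionally verify existence by the chain rule and attend to the sector/continuity-at-zero issues, which the paper leaves implicit. The one blemish is the unexplained factor $(-1)$ you insert next to $-A^{\alpha}$ in that verification; it papers over a sign tension already present in the text itself (Theorem \ref{thm3.9}, item \eqref{deri}, gives $\frac{\rm d}{{\rm d}t}\mathscr{T}_\alpha(t)=-A^{\alpha}\mathscr{T}_\alpha(t)$ while \eqref{acp} is written with $+A^{\alpha}$), a discrepancy the paper's own proof avoids by using \eqref{acpsol} purely as a black box.
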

\begin{proof}
Since $\phi(t)>0$ for $t>0$ and \eqref{representation}, it follows
\[
\mathscr{T}_{\alpha,\phi}(t)=\frac{1}{2\pi}\int_{\Gamma_\theta}e^{-z^{\alpha}\int_0^t \phi(s){\rm d}s}(z-A)^{-1}{\rm d}z,\quad \omega<\theta<\mu<\text{min}\left\{\pi,\frac{\pi}{2\alpha}\right\},
\]
is a bounded linear operator in $X.$ Now, by Theorem \ref{thm3.9} (item (4)), we obtain
\begin{align*}
\frac{{\rm d}}{{\rm d}t}\mathscr{T}_{\alpha,\phi}(t)=\phi(t)A^{\alpha}\mathscr{T}_{\alpha,\phi}(t).    
\end{align*}
From Theorem \ref{thm3.9} (item \eqref{continuity}) and $\alpha>1+\gamma$, $u(t)=\mathscr{T}_{\alpha,\phi}(t)u_0$ is a classical solution of problem \eqref{eqc}.
\end{proof}

\indent Let us now mention, in particular, the classical and important case $\alpha=1.$ Thus, suppose that:
\begin{equation}\label{eqc-coro}
\begin{split}
u_t(t)&=\phi(t)Au(t),\quad 0<t<T, \\
u(0)&=u_0\in D(A),
\end{split}
\end{equation}
where $A\in \Theta_\omega^\gamma$ with $0<\omega<\pi/2.$ In what follows, we denote by $\mathscr{T}(t),$ $\mathscr{T}_{\phi}(t)$, the operators $\mathscr{T}_1(t)$, and $\mathscr{T}_{1,\phi}(t)$ respectively. 

\begin{corollary}\label{directsolution-coro}
Let $X$ be a complex Banach space. Then the solution operator of problem \eqref{eqc-coro} can be obtained as:
\[
\mathscr{T}_{\phi}(t)=\mathscr{T}\left\{\left(\int_0^t \phi(s){\rm d}s\right)\right\},\quad 0<t<T.
\]
\end{corollary}

\section{Inverse abstract Cauchy problems}\label{inverse}

We now use the representation of the solution operator of the (direct) abstract Cauchy problem (found in the previous Section) to find the time-dependent coefficient for the inverse abstract Cauchy problem. The method is based on the consideration of two (direct) abstract Cauchy problems with the same time-dependent coefficient.  

\indent We start by establishing a useful property that involves the analytic semigroup of growth order and the fractional power of an operator. Let $A\in \Theta_\omega^\gamma$ for some $0<\omega<\pi/2.$ From \cite[Theorem 2.5]{JEE2002} and Theorem \ref{thm3.9} (item \eqref{need}), we have
\begin{equation}\label{conmutative}
A^{\alpha}\mathscr{T}_\alpha(t)x=\mathscr{T}_\alpha(t)A^{\alpha}x,\quad t\in S^0_{\frac{\pi}{2}-\alpha\omega},\quad x\in D(A^{\alpha}),\quad 0<\alpha<\frac{\pi}{2\omega},
\end{equation}
since the function $z^{\alpha}e^{-tz^{\alpha}}$ is in the class $\mathcal{F}_0^\gamma(S_\mu^0)$ $\big(\omega<\theta<\mu<\text{min}\left\{\pi,\frac{\pi}{2\alpha}\right\}\big).$ More details about it can be found in \cite[Section 2]{JEE2002}. 

\indent For $A\in \Theta_\omega^\gamma$ $(0<\omega<\pi/2)$ and $\phi:[0,+\infty)\to[0,+\infty)$ being a continuous function with $\phi(t)>0$ for $t>0$, we consider the following two abstract Cauchy problems with the variable coefficient $\phi$:
\begin{equation}\label{eqce1}
\begin{split}
u_t(t)&=\phi(t)A^{\alpha}u(t),\quad 0<t<T,\quad 1+\gamma<\alpha<\frac{\pi}{2\omega}, \\
u(0)&=u_0\in D(A^{\alpha}),
\end{split}
\end{equation} 
and 
\begin{equation}\label{eqce2}
\begin{split} v_t(t)&=\phi(t)A^{\alpha}v(t),\quad 0<t<T,\quad 1+\gamma<\alpha<\frac{\pi}{2\omega}, \\
v(0)&=A^{\alpha}u_0\in D(A^{\alpha}).
\end{split}
\end{equation} 

\indent From now on, we always assume $u_0\neq0$ (respectively $Au_0\neq0$). Note that if $u_0=0$ then the classical solution $u$ of \eqref{eqce1} (respectively \eqref{eqce2}) satisfies $u\equiv0$ for $0<t<T.$ These latter cases are not considered to recover the variable coefficient $\phi$ above.

\indent By Theorem \ref{directsolution}, the solutions of problems \eqref{eqce1} and \eqref{eqce2} are given, respectively, by 
\begin{equation}\label{solutionse}
u(t)=\mathscr{T}_\alpha\left\{\left(\int_0^t \phi(r){\rm d}r\right)\right\}u_0,\,\text{and}\,v(t)=\mathscr{T}_\alpha\left\{\left(\int_0^t \phi(r){\rm d}r\right)\right\}A^{\alpha}u_0.
\end{equation}

\indent Let us now fix an observation point $q$ for the following two time dependent quantities:
\[h_1(t):=u(t,q)\neq0,\quad h_2(t):=v(t,q)\neq0, \quad 0<t<T.\]

\indent This point is not an element itself of the Banach space $X.$ In fact, the point belongs to the measure space where we are defining $X.$ For instance, consider the Banach space $C^{l}(\overline{\Omega})$ for $0<l<1$ and $\Omega$ a bounded domain in $\mathbb{R}^n$ $(n\geqslant1)$ with a smooth boundary. In this case, we take $q\in\Omega.$ While, in the space $L^3(\mathbb{R}^2),$ we then take $q\in\mathbb{R}^2.$ In the latter spaces we can always find and define different types of almost sectorial operators, see e.g. \cite[Section 2]{JEE2002} or \cite[Section 6]{section3}.

\begin{example}
Let $\Omega$ be a bounded domain in $\mathbb{R}^n$ $(n\geqslant1)$ with boundary $\partial\Omega$ of class $C^4.$ Let $X=C^{\beta}(\Omega)$ $(0<\beta<1).$ Set 
\[
\Delta=\sum_{k=1}^n \frac{\partial^2}{\partial x_k^2},\quad D(\Delta)=\{u\in C^{2+\beta}(\Omega):\,\,u(0)=0\,\,\text{on}\,\,\partial\Omega\}.
\]

From \cite[Example 2.3]{JEE2002}, it follows that there exist $\nu,\rho>0,$ such that 
\[
\Delta+\nu\in\Theta_{\frac{\pi}{2}-\rho}^{\frac{\beta}{2}-1}\big(C^{\beta}(\Omega)\big).
\]

Consider the following initial value problem for the heat equation on $\mathbb{R}^n:$
\begin{align}\label{heate}
\begin{split}
w_t(t,x)&=\phi(t)\big(\Delta_x +\nu\big)w(t,x),\quad x\in\Omega\subset\mathbb{R}^n,\,\,t>0, \\
w(t,x)&|_{_{_{t=0+}}}=g(x),\quad x\in\Omega, \\
w(t,x)&|_{_{_{t=0+}}}=0,\quad x\in\partial\Omega,
\end{split}
\end{align}   
where $\text{supp}(g)\subset\Omega,$ $g,\widehat{g}\in L^1(\mathbb{R}^n)$. If $g\in D(\Delta)$ then there exists a classical solution of problem \ref{heate} (see Theorem \ref{directsolution}). The solution can be also found by applying the space Fourier transform and then solving the transform equation, which is an ordinary differential equation with respect to the variable $t$. So, the solution is given by:
\begin{equation}\label{solutionhe}
w(t,x)=\frac{e^{\nu \int_0^t \phi(s){\rm d}s}}{\left(4\pi\int_0^t \phi(s){\rm d}s\right)^{n/2}}\int_{\Omega}e^{-\frac{|x-y|^2}{\int_0^t \phi(s){\rm d}s}}g(y)dy,\quad t>0,\,\, x\in\Omega. 
\end{equation} 

Note that from representation \eqref{solutionhe} it is clear that we have many possibilities for $q\in\Omega$ such that $h_1(t)=w(t,q)\neq0$ as well as $h_2(t)\neq0.$
\end{example}

\medskip\indent We establish the main result of this section as follows. 

\begin{theorem}
Let $X$ be a complex Banach space. Then there exists a unique function $\phi(t)$ of the inverse abstract Cauchy problem \eqref{eqce1} (and \eqref{eqce2}) given by 
 \[\phi(t)=\frac{\partial_t h_1(t)}{h_2(t)},\quad\text{for any}\quad 0<t<T.\]
\end{theorem}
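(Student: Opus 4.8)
The plan is to exploit the governing equation \eqref{eqce1} together with the commutativity relation \eqref{conmutative} to derive a pointwise algebraic identity linking $\partial_t h_1$, $\phi$, and $h_2$, from which the formula follows immediately. First I would differentiate the observation $h_1(t)=u(t,q)$ in time. By the regularity of the solution operator (the analyticity in Theorem \ref{thm3.9}, item \eqref{deri}, guarantees that $t\mapsto u(t)$ is $C^1$), the time derivative commutes with evaluation at the fixed point $q$, so that $\partial_t h_1(t)=u_t(t,q)$. Substituting equation \eqref{eqce1} then gives
\[
\partial_t h_1(t)=\phi(t)\,\big(A^{\alpha}u(t)\big)(q).
\]

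The crucial step is to identify $A^{\alpha}u(t)$ with $v(t)$. Writing $\eta(t)=\int_0^t\phi(r)\,{\rm d}r$, the representation \eqref{solutionse} gives $u(t)=\mathscr{T}_\alpha(\eta(t))u_0$ with $u_0\in D(A^{\alpha})$, so the commutativity property \eqref{conmutative} yields
\[
A^{\alpha}u(t)=A^{\alpha}\mathscr{T}_\alpha(\eta(t))u_0=\mathscr{T}_\alpha(\eta(t))A^{\alpha}u_0=v(t),
\]
where the last equality is precisely the representation of $v$ in \eqref{solutionse}. Evaluating at $q$ turns the previous display into the sought identity $\partial_t h_1(t)=\phi(t)\,h_2(t)$, and dividing by $h_2(t)$ produces the claimed formula. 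Uniqueness is then automatic: any admissible coefficient generating the data $(h_1,h_2)$ must satisfy this same identity pointwise, hence it is forced to equal $\partial_t h_1/h_2$.

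The main obstacle I anticipate is the well-posedness of the quotient, namely ensuring $h_2(t)\neq0$ for every $0<t<T$. Since $h_2(t)=v(t,q)=\big(\mathscr{T}_\alpha(\eta(t))A^{\alpha}u_0\big)(q)$, this amounts to showing that the evaluation functional at $q$ does not annihilate $v(t)$. I would assemble this from three ingredients: the injectivity of $A^{\alpha}$ (so that $A^{\alpha}u_0\neq0$ whenever $u_0\neq0$), the injectivity condition built into the semigroup (Definition \ref{analytic-s}, item (4)), which forbids $v(t)=\mathscr{T}_\alpha(\eta(t))A^{\alpha}u_0$ from vanishing, and an admissibility hypothesis on the observation point $q$ guaranteeing $v(t,q)\neq0$ on $(0,T)$. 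A secondary, purely technical point is the rigorous justification of differentiating under the evaluation at $q$, which is handled by the strong $C^1$-regularity of $t\mapsto\mathscr{T}_\alpha(\eta(t))u_0$ inherited from Theorem \ref{thm3.9}.
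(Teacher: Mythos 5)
Your proposal is correct and follows essentially the same route as the paper: both derive the identity $\partial_t h_1(t)=\phi(t)h_2(t)$ by differentiating $u(t)=\mathscr{T}_\alpha\big(\int_0^t\phi(r)\,{\rm d}r\big)u_0$ and then moving $A^{\alpha}$ past the semigroup via \eqref{conmutative} to recognize $v(t)$. Your added discussion of why $h_2(t)\neq0$ (injectivity of $A^{\alpha}$, item (4) of Definition \ref{analytic-s}, and an admissibility assumption on $q$) is a point the paper's proof passes over in silence, so it is a welcome refinement rather than a deviation.
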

\begin{proof}
Using \eqref{solutionse}, Theorem \ref{thm3.9} (item \eqref{deri}) and property \eqref{conmutative} we obtain
\begin{align*}
\partial_t h_1(t)=\partial_t u(t,q)&=\partial_t \mathscr{T}_\alpha\left\{\left(\int_0^t \phi(r){\rm d}r\right)\right\}u_0(t,q) \\
&=\phi(t)A^{\alpha}\mathscr{T}_\alpha\left\{\left(\int_0^t \phi(r){\rm d}r\right)\right\}u_0(t,q) \\
&=\phi(t)\mathscr{T}_\alpha\left\{\left(\int_0^t \phi(r){\rm d}r\right)\right\}A^{\alpha}u_0(t,q) \\
&=\phi(t)v(t,q)=\phi(t)h_2(t),
\end{align*}
which completes the proof.
\end{proof}

\indent Again, it is worth highlighting the particular case $\alpha=1.$ Hence, let us study the following two abstract Cauchy problems with the variable coefficient $\phi$:
\begin{equation}\label{eqce1-coro}
\begin{split}
u_t(t)&=\phi(t)Au(t),\quad 0<t<T, \\
u(0)&=u_0\in D(A),
\end{split}
\end{equation} 
and 
\begin{equation}\label{eqce2-coro}
\begin{split}
v_t(t)&=\phi(t)Av(t),\quad 0<t<T, \\
v(0)&=Au_0\in D(A),
\end{split}
\end{equation}
where $A\in \Theta_\omega^\gamma$ $(0<\omega<\pi/2)$ and $\phi:[0,+\infty)\to[0,+\infty)$ is a continuous function with $\phi(t)>0$ for $t>0.$ The solution operators of problems \eqref{eqce1-coro} and \eqref{eqce2-coro} are given respectively by 
\[
u(t)=\mathscr{T}\left\{\left(\int_0^t \phi(r){\rm d}r\right)\right\}u_0,\quad\text{and}\quad v(t)=\mathscr{T}\left\{\left(\int_0^t \phi(r){\rm d}r\right)\right\}Au_0.
\]
Here, for the fixed point $q$, we denote by
\[
\widetilde{h_1}(t)=\mathscr{T}\left\{\left(\int_0^t \phi(r){\rm d}r\right)\right\}u_0(t,q)\,\,\text{and}\,\,\widetilde{h_2}(t)=\mathscr{T}\left\{\left(\int_0^t \phi(r){\rm d}r\right)\right\}Au_0(t,q).
 \]
\begin{corollary}
Let $X$ be a complex Banach space. Assume  $\phi:[0,+\infty)\to[0,+\infty)$ is a continuous function such that $\phi(t)>0$ for $t>0$. Then there exists a unique function $\phi(t)$ of the inverse abstract Cauchy problem \eqref{eqce1-coro} (and \eqref{eqce2-coro}) constructed as 
 \[\phi(t)=\frac{\partial_t \widetilde{h_1}(t)}{\widetilde{h_2}(t)},\quad\text{for any}\quad 0<t<T.\] 
\end{corollary}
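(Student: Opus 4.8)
The plan is to mirror exactly the argument used for the fractional case (Theorem in Section \ref{inverse}), now specialized to $\alpha=1$, where $A^{\alpha}=A$ and $\mathscr{T}_\alpha=\mathscr{T}$. First I would observe that the commutation identity \eqref{conmutative} specializes to $A\mathscr{T}(t)x=\mathscr{T}(t)Ax$ for all $x\in D(A)$, and that the derivative formula from Theorem \ref{thm3.9}, item \eqref{deri}, specializes to $\frac{\rm d}{{\rm d}t}\mathscr{T}(t)=-A\mathscr{T}(t)$. Both are valid since the class assumptions are unchanged; the only care needed is that $\frac{\pi}{2\alpha}=\frac{\pi}{2}$ when $\alpha=1$, so the sector conditions in \eqref{conmutative} hold under the standing hypothesis $0<\omega<\pi/2$.

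Next I would compute $\partial_t\widetilde{h_1}(t)$ directly. Writing $\eta(t)=\int_0^t\phi(r){\rm d}r$ so that $\eta'(t)=\phi(t)$, the chain rule together with the specialized derivative formula gives
\begin{align*}
\partial_t\widetilde{h_1}(t)
&=\partial_t\,\mathscr{T}\big(\eta(t)\big)u_0(t,q)
=\phi(t)\,A\,\mathscr{T}\big(\eta(t)\big)u_0(t,q) \\
&=\phi(t)\,\mathscr{T}\big(\eta(t)\big)A u_0(t,q)
=\phi(t)\,\widetilde{h_2}(t),
\end{align*}
where the third equality uses the commutation identity and the hypothesis $u_0\in D(A)$, and the last equality is just the definition of $\widetilde{h_2}$ (the second displayed formula before the corollary, which should read $\widetilde{h_2}$). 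Solving for $\phi$ yields the stated formula $\phi(t)=\partial_t\widetilde{h_1}(t)/\widetilde{h_2}(t)$ on $(0,T)$, and the division is legitimate because $\phi(t)>0$ for $t>0$ forces $\widetilde{h_2}(t)\neq0$ there, which simultaneously gives uniqueness: any admissible coefficient must equal this quotient.

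The main (and only real) obstacle is not analytical but lies in guaranteeing that $\widetilde{h_2}(t)\neq0$ so the quotient is well defined at the observation point $q$. In the fractional statement this is handled implicitly; here I would make it explicit by noting that $v(t)=\mathscr{T}(\eta(t))Au_0$ is, by the solution representation, the (unique, nonzero when $Au_0\neq0$) solution of problem \eqref{eqce2-coro}, and by property (4) of Definition \ref{analytic-s} the operator $\mathscr{T}(\eta(t))$ is injective, so $v(t)\neq0$ as long as $Au_0\neq0$. Choosing the observation point $q$ so that $\widetilde{h_2}(t)=v(t,q)\neq0$ then makes the formula meaningful. I would close by remarking that, apart from this nondegeneracy bookkeeping and the trivial substitution $\alpha=1$, the proof is an immediate instance of the preceding theorem, so it suffices to refer back to that argument rather than reproduce every line.
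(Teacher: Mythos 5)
Your proposal is correct and follows essentially the same route as the paper: the corollary is simply the $\alpha=1$ specialization of the preceding theorem, and your computation (chain rule via $\eta(t)=\int_0^t\phi(r)\,{\rm d}r$, the derivative formula from Theorem \ref{thm3.9}, and the commutation identity \eqref{conmutative}) reproduces the paper's proof of that theorem line for line, with the added and worthwhile remark on why $\widetilde{h_2}(t)\neq0$. Note only that you state $\frac{\rm d}{{\rm d}t}\mathscr{T}(t)=-A\mathscr{T}(t)$ but then compute without the minus sign; this matches the paper's own sign convention in its proof, so it is not a substantive deviation.
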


\section{Non-autonomous Cauchy problems}\label{Non-autonomous}

In this section, we suppose that our linear operator $A$ depends also on a time-variable $\tau\geqslant0$. In this case, the operator is usually denoted by $A(\tau).$ We will work with the following class of operators \cite[Definition 2.3]{non} (see also \cite{2008}) which we also denote by $\Theta_\omega^\gamma(X)$ because it is adapted from Definition \ref{aso}. 
\begin{definition}\label{aso-n}
For each $\tau\geqslant0.$ Let $-1<\gamma<0$ and $0\leqslant \omega<\pi.$ By $\Theta_\omega^\gamma(X)$ we denote the set of all closed linear operators $A(\tau):D(A(\tau))\subset X\to X$ that satisfy
\begin{enumerate}[(a)]
    \item $\sigma(A(\tau))\subset S_\omega.$
    \item For any $\omega<\mu<\pi$, there exists a positive constant $C_\mu$ independent of $\tau$ such that  
    \[
    \|(z-A(\tau))^{-1}\|\leqslant C_{\mu}|z|^{\gamma},\quad\text{for any}\quad z\notin S_\mu.
    \]
\end{enumerate}
\end{definition}

\indent A linear operator $A(\tau)$ will be called an almost sectorial operator in $X$ if $A(\tau)\in\Theta_\omega^\gamma(X)$ for each $\tau\geqslant0.$ In this scenario, we can also generate a type of semigroup with $A(\tau).$ In fact, by \cite[Lemma 2.7]{non} (see also \cite[Pages 634--637]{2008}), the semigroup $\{T_{A(\tau)}(t)\}_{t\in S_{\frac{\pi}{2}-\omega}}^0$ $(\omega<\theta<\mu<\frac{\pi}{2}-|\text{arg}\,t|,\,\text{for each}\,\tau\geqslant0)$ of growth order $1+\gamma$ is studied and defined as follows 
\[
T_{A(\tau)}(t)=\frac{1}{2\pi i}\int_{\Gamma_\theta}e^{-tz}(z-A(\tau))^{-1}\,{\rm d}z,\quad t\in S_{\frac{\pi}{2}-\omega}^0, 
\]
which is a bounded linear operator in $X,$ such that: i) $T_{A(\tau)}(0)=I;$ ii) $T_{A(\tau)}(t+s)=T_{A(\tau)}(t)T_{A(\tau)}(s)$ for each $t,s>0;$ iii) $T_{A(\tau)}(t)x=0$ implies $x=0$ for $t>0;$ iv) $\lim_{t\to0}t^{1+\gamma}T_{A(\tau)}(t)$ is bounded; (v) the function $t\mapsto T_{A(\tau)}(t)$ is analytic in $S_{\frac{\pi}{2}-\omega}^0$ and $\frac{d^n}{dt^n}T_{A(\tau)}(t)=\big(-A(\tau)\big)^n T_{A(\tau)}(t)$ for all $t\in S_{\frac{\pi}{2}-\omega}^0.$

\medskip\indent Consider the following abstract Cauchy problem:
\begin{equation}\label{acp-t}
\begin{split}
u_t(t)&=A(\tau)u(t),\quad t>0,\quad \tau\geqslant0, \\
u(0)&=u_0\in X.
\end{split}
\end{equation}
By a classical solution of problem \eqref{acp-t}, we mean a function 
\[
u\in C\big([0,+\infty);X\big)\cap C^1\big((0,+\infty);X\big)
\]
that takes values in $D(A(\tau))$ for any $t>0$ and satisfies \eqref{acp-t}. It is known (see e.g. \cite[Lemma 2.3]{2008} or \cite[Lemma 2.7]{non}) that the equation \eqref{acp-t} has a unique classical solution given by 
\begin{equation}\label{acpsol-t}
u(t)=T_{A(\tau)}(t)u_0,\quad\text{for any}\quad t\geqslant0,\quad \tau\geqslant0.
\end{equation}

\subsection{ Direct abstract Cauchy problem}\label{direct-t}

\medskip We study the abstract  Cauchy problem with a coefficient $\phi(t)$ and $A(\tau)\in \Theta_\omega^\gamma$ $\big(0<\omega<\pi/2,\,\tau\geqslant0\big).$ Here, we assume that $\phi:[0,+\infty)\to[0,+\infty)$ is a continuous function such that $\phi(t)>0$ for $t>0.$ The problem to be studied reads as follows:
\begin{equation}\label{eqc-t}
\begin{split}
u_t(t)&=\phi(t)A(\tau)u(t),\quad t>0,\quad \tau\geqslant0, \\
u(0)&=u_0\in X.
\end{split}
\end{equation}  
The proof of the next statement follows the same calculations and steps as the proof of Theorem \ref{directsolution}. Therefore, we omit the proof and leave it to the interested reader.
\begin{theorem}\label{directsolution-t}
Let $X$ be a complex Banach space. Then the solution operator of the problem \eqref{eqc-t} is expressed as:
\begin{equation}\label{solutioneqc-t}
T_{A(\tau),\phi}(t)=T_{A(\tau)}\left\{\left(\int_0^t \phi(s){\rm d}s\right)\right\},\quad t\geqslant0,\quad \tau\geqslant0.
\end{equation}
\end{theorem}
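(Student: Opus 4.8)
The plan is to reproduce verbatim the deterministic time-change used in the proof of Theorem \ref{directsolution}, treating the parameter $\tau\geqslant0$ as frozen throughout, so that $A(\tau)$ plays exactly the role that $A^{\alpha}$ played there. Since $\tau$ never interacts with the evolution variable $t$ in \eqref{eqc-t}, the operator $A(\tau)$ is, for the purposes of the $t$-dynamics, a fixed almost sectorial operator, and the corresponding autonomous problem \eqref{acp-t} already possesses the unique classical solution \eqref{acpsol-t}. The whole proof is then a transcription with $A^{\alpha}\rightsquigarrow A(\tau)$ and $\mathscr{T}_\alpha\rightsquigarrow T_{A(\tau)}$.

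First I would set $\eta(t)=\int_0^t\phi(s)\,{\rm d}s$. Because $\phi$ is continuous and strictly positive on $(0,+\infty)$, the map $\eta:[0,+\infty)\to[0,+\infty)$ is a continuous strictly increasing bijection, hence admits a continuous inverse $\zeta$ satisfying $\zeta(\eta(t))=t$ and $\eta(\zeta(s))=s$; differentiating the latter relation yields $\partial_s\zeta(s)=1/\phi(\zeta(s))$. This step is identical to the one in Theorem \ref{directsolution} and requires no change.

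Next, for any solution $u$ of \eqref{eqc-t} with $\tau$ held fixed, I would apply the chain rule to $w(s):=u(\zeta(s))$ and substitute the equation:
\[
\frac{\rm d}{{\rm d}s}\big(u(\zeta(s))\big)=u_t(\zeta(s))\,\partial_s\zeta(s)=\big(\phi(\zeta(s))A(\tau)u(\zeta(s))\big)\frac{1}{\phi(\zeta(s))}=A(\tau)u(\zeta(s)),
\]
with $w(0)=u(0)=u_0$. Thus $w$ solves the autonomous problem \eqref{acp-t}, and the uniqueness recorded in \eqref{acpsol-t} (see \cite[Lemma 2.3]{2008}) forces $w(s)=T_{A(\tau)}(s)u_0$ for all $s\geqslant0$. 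Setting $s=\eta(t)$ and using $\zeta\circ\eta=\mathrm{id}$ gives
\[
u(t)=u(\zeta(\eta(t)))=T_{A(\tau)}\big(\eta(t)\big)u_0=T_{A(\tau)}\left\{\left(\int_0^t\phi(s)\,{\rm d}s\right)\right\}u_0,
\]
which is exactly \eqref{solutioneqc-t}.

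The only point that genuinely deserves attention — and the place where the ``non-autonomous'' terminology could mislead — is that $\tau$ must remain external to the $t$-evolution during the time change. If $\tau$ were instead slaved to $t$ (true non-autonomy of the operator along the trajectory), then the composition $A(\tau)u(\zeta(s))$ would inherit the rescaled time and the reduction to the fixed-operator problem \eqref{acp-t} would fail, since $T_{A(\cdot)}$ is only a semigroup in $t$ for each frozen $\tau$. Under the formulation adopted here, however, $\tau$ is a fixed parameter, so no such obstacle arises and the argument is a direct analogue of the proof of Theorem \ref{directsolution}; for this reason it is reasonable to omit the full details.
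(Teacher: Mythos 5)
Your proposal is correct and matches the paper's intent exactly: the paper omits this proof, stating only that it ``follows the same calculations and steps of the proof of Theorem \ref{directsolution},'' and your transcription of the time-change argument with $A^{\alpha}\rightsquigarrow A(\tau)$ and $\mathscr{T}_\alpha\rightsquigarrow T_{A(\tau)}$, invoking the uniqueness in \eqref{acpsol-t}, is precisely that argument. Your closing observation that $\tau$ must stay frozen during the evolution is also consistent with the paper's own remark distinguishing this setting from the genuinely non-autonomous case $A(t)$.
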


\subsection{Inverse abstract Cauchy problems}\label{inverse-t}

We start by establishing a property that involves the semigroup $T_{A(\tau),\phi}(t)$ of growth order $1+\gamma$ and the operator $A(\tau)$. Let $A(\tau)\in \Theta_\omega^\gamma$ for some $0<\omega<\pi/2$ and $\tau\geqslant0.$ From \cite[Pages 22-24]{2008} (or \cite[Lemma 2.7 (ii)]{non}), we know that
\begin{equation}\label{conmutative-t}
A(\tau)T_{A(\tau),\phi}(t)x=T_{A(\tau),\phi}(t)A(\tau)x,\quad t\in S^0_{\frac{\pi}{2}-\omega},\,\, x\in D(A(\tau)),\,\,\tau\geqslant0.
\end{equation} 

\indent For $A(\tau)\in \Theta_\omega^\gamma$ $(0<\omega<\pi/2,\,\tau\geqslant0)$ and $\phi:[0,+\infty)\to[0,+\infty)$ being a continuous function with $\phi(t)>0$ for $t>0$, we consider the following two abstract Cauchy problems with the variable coefficient $\phi$:
\begin{equation}\label{eqce1-t}
\begin{split}
u_t(t)&=\phi(t)A(\tau)u(t),\quad t>0,\quad \tau\geqslant0, \\
u(0)&=u_0\in D(A(\tau)),
\end{split}
\end{equation} 
and 
\begin{equation}\label{eqce2-t}
\begin{split}
v_t(t)&=\phi(t)A(\tau)v(t),\quad t>0,\quad \tau\geqslant0, \\
v(0)&=A(\tau)u_0\in D(A(\tau)).
\end{split}
\end{equation} 
By Theorem \ref{directsolution-t}, the solution operators of the problems \eqref{eqce1-t} and \eqref{eqce2-t} are represented by
\begin{equation}\label{solutionse-t}
u(t)=T_{A(\tau)}\left\{\left(\int_0^t \phi(r){\rm d}r\right)\right\}u_0,\,v(t)=T_{A(\tau)}\left\{\left(\int_0^t \phi(r){\rm d}r\right)\right\}A(\tau)u_0.
\end{equation}

\indent Let us now fix an observation point $q$ for the following two time dependent quantities:
\[k_1(t):=u(t,q)\neq0,\quad k_2(t):=v(t,q)\neq0, \quad 0<t<T.\]

\indent We now provide the main result of this subsection. 
\begin{theorem}
Let $X$ be a complex Banach space. Then there exists a unique function $\phi(t)$ of the inverse abstract Cauchy problem \eqref{eqce1-t} (and \eqref{eqce2-t}) given by 
 \[\phi(t)=\frac{\partial_t k_1(t)}{k_2(t)},\quad\text{for any}\quad t>0.\]
\end{theorem}
\begin{proof}
Using \eqref{solutionse-t}, \cite[Lemma 2.3]{2008} (or \cite[Lemma 2.7 (i)]{non}) and property \eqref{conmutative-t} it follows that
\begin{align*}
\partial_t k_1(t)=\partial_t u(t,q)&=\partial_t T_{A(\tau)}\left\{\left(\int_0^t \phi(r){\rm d}r\right)\right\}u_0(t,q) \\
&=\phi(t)A(\tau)T_{A(\tau)}\left\{\left(\int_0^t \phi(r){\rm d}r\right)\right\}u_0(t,q) \\
&=\phi(t)T_{A(\tau)}\left\{\left(\int_0^t \phi(r){\rm d}r\right)\right\}A(\tau)u_0(t,q) \\
&=\phi(t)v(t,q)=\phi(t)k_2(t),
\end{align*}
which finishes the proof.
\end{proof}

\begin{remark}
Note that the equations in this section depend on operators $A(\tau)$ for some $\tau\geqslant0.$ Therefore, we study, in principle, an evolution equation with respect to the time-variable $t$ and an almost sectorial operator $A(\tau),$ where $t$ and $\tau$ do not have to be related. The case $A(t)$ (we mean, e.g. $u_t(t)=A(t)u(t);$ $u(\tau)=u_0;$ $\tau>t$) is totally different since the solution operator is associated with a new notion of a linear evolution process introduced in \cite{2008}. We do not study this case because this will be done elsewhere.
\end{remark}

\section*{\small
 Conflict of interest} 

 {\small
 The author declares that there are no conflicts of interest.}


\begin{thebibliography}{99}

\bibitem{Base} J.M. Arrieta, A.N. Carvalho, G. Lozada--Cruz. Dynamics in dumbbell domains. II. The limiting problem. J. Differ. Equ., 247(1), (2009), 174--202.

\bibitem{dense-1} M. Belluzi, T. Caraballo, M.J.D. Nascimento, K. Schiabel. Long--Time Behavior for Semilinear Equation with Time--Dependent and Almost Sectorial Linear Operator. J. Dyn. Diff. Equat., 37(2), (2025), 2565--2600. 

\bibitem{2008} A.N. Carvalho, T. Dlotko, M.J.D. Nascimento. Non-autonomous semilinear evolution equations with almost sectorial operators. J. Evol. Equ., 8, (2008), 631--659.

\bibitem{6-intro} J.W. Cholewa, T. Dlotko. Global attractors in abstract parabolic problems. Cambridge University Press, 2000.

\bibitem{r-2} G. Da Prato. Semigruppi di crescenca $n$. Ann. Scuola Norm. Sup. Pisa, 20(3), (1966), 753--782.

\bibitem{more-intro} G. Da Prato, E. Sinestrari. Differential operators with non dense domain. Ann. Scuola Norm. Sup. Pisa Cl. Sci., 14(4), (1987), 285--344.

\bibitem{haase} M. Haase. Lectures on functional calculus. In 21st International Internet Seminar, Kiel Univ, 2018.

\bibitem{non} J.W. He, Y. Zhou. Non-autonomous fractional Cauchy problems with almost sectorial operators. B. Sci. Math., 191, (2024), \# 103395.

\bibitem{li1} Y.N. Li, H.R. Sun. Integrated fractional resolvent operator function and fractional abstract Cauchy problem. Abstr. Appl. Anal., 2014, (2014), \# 430418.

\bibitem{li2} Y.N. Li, H.R. Sun, Z. Feng. Fractional abstract Cauchy problem with order $\alpha\in(1,2)$. Dyn. Partial Differ. Equ., 13(2), (2016), 155--177. 

\bibitem{lunardi} A. Lunardi. Analytic semigroups and optimal regularity in parabolic problems. Modern Birkh\"auser
Classics. Birkh\"auser/Springer Basel AG, Basel, 1995.

\bibitem{r-13} I. Miyadera, S. Oharu, N. Okazawa. Generation theorems of semi--groups of linear operators. Publ. Res. Inst. Math. Sci., Kyoto Univ., 8, (1972/73), 509--555.

\bibitem{oka} N. Okazawa. A generation theorem for semigroups of growth order $\alpha.$ Tohoku Math. J., 26, (1974),
39--51.

\bibitem{ORS} T. Ozawa, J.E. Restrepo, D. Suragan. Inverse abstract Cauchy problems. Appl. Anal., 101(14), (2022), 4965--4969.

\bibitem{pazy} A. Pazy. Semigroups of linear operators and applications to partial differential equations. Applied
Mathematical Sciences, Springer, New York, 1983.

\bibitem{JEE2002} F. Periago, B. Straub. A functional calculus for almost sectorial operators and applications to abstract evolution equations. J. Evol. Equ., 2, (2002), 41--68.

\bibitem{peri} F. Periago. Global existence, uniqueness, and continuous dependence for a semilinear initial value problem. J. Math. Anal. Appl., 280(2), (2003), 413--423.

\bibitem{Prus} J. Pr\"{u}ss. Evolutionary Integral Equations and Applications. Monogr. Math. 87, Birkh\"{a}user, Basel, 1993.

\bibitem{19-intro} H.B. Stewart. Generation of analytic semigroups by strongly elliptic operators. Trans. Amer. Math. Soc. 199, (1974), 141--162.

\bibitem{r-18} P.E. Sobolevski\v i. On semigroups of growth $\alpha.$ Dokl. Akad. Nauk SSSR, 196(3), (1971), 535--537.

\bibitem{wahl} W. von Wahl. Gebrochene Potenzen eines elliptischen Operators und parabolische Differentialgleichungen in R\"aumen h\"olderstetiger Funktionen. Nachr. Akad. Wiss. G\"ottingen, Math. Phys. Klasse, 11, (1972), 231--258.

\bibitem{section3} R.N. Wang, D.H. Chen, T.J. Xiao. Abstract fractional Cauchy problems with almost sectorial operators. J. Differ. Equ., 252(1), (2012), 202--235. 

\bibitem{r-22} A.V. Zafievski\v i. On semigroups with singularities summable with a power-weight at zero. Soviet Math.
Dokl., 11(6), (1970), 1408--1411.

\end{thebibliography}
\end{document}